\newtheorem{theorem}{Theorem}[section]
\newtheorem{prop}{Proposition}[section]
\newtheorem{lemma}{Lemma}[section]
\newtheorem{remark}{Remark}[section]
\begin{document}

\title{Distributed Coverage Control of Multi-Agent Systems in Uncertain Environments using Heat Transfer Equations}
\author{Yinan Zheng, Chao Zhai \thanks{Yinan Zheng and Chao Zhai are with the School of Automation, China University of Geosciences, Wuhan 430074 P. R. China, and with Hubei Key Laboratory of Advanced Control and Intelligent Automation for Complex Systems, and Engineering Research Center of Intelligent Technology for Geo-Exploration, Ministry of Education, Wuhan 430074, P. R. China. Corresponding author: Chao Zhai~(email: zhaichao@amss.ac.cn).}}

\maketitle

\begin{abstract}
This paper addresses the coverage control problem of multi-agent system in the uncertain environment. With the aid of Voronoi partition, a distributed coverage control formulation of multi-agent system is proposed to complete the workload in the uncertain environments.
Driven by the gradient of thermal field, each agent is able to move around for clearing the workload on its own subregion.
Theoretical analysis is conducted to ensure the completion of workload in finite time. Finally, numerical simulations are carried out to demonstrate the effectiveness and advantages of the proposed coverage control approach as compared to other existing approaches.
\end{abstract}

Keywords: Distributed coverage control, multi-agent system, uncertain environments, heat transfer equation, region partition.

\section{Introduction}
With the rapid development of communication and electronic technology, multi-agent systems have been widely used in many application fields, including exploration, surveillance, region coverage, search and rescue \cite{10}. Compared with a single agent, a team of agents are more efficient to carry out a complicated task, especially in the uncertain environment. Specifically, the workload of each individual in the system is relatively reduced and the increase in the number makes the whole system obtain more comprehensive information. Meanwhile, distributed system design enables each agent to cooperate to complete tasks while ensuring individual independence. Naturally, the research on system cooperation has become the main content, and has a lot of research results in \cite{1}-\cite{6}.

Among various coordination tasks, multi-agent region coverage is a hot research topic. Taking multi-sensor network as an example, the given area is covered to accomplish the global monitoring task, in which the deployment mode of the sensor has to be explored in order to achieve the maximum coverage rate and the shortest coverage time. To deal with this problem, distributed strategy (i.e., divide-and-conquer) may paly a crucial role in handling environmental uncertainties. For instance, \cite{7} and \cite{8} capitalize on Voronoi partition and introduce an optimization method to deploy the sensor networks. In \cite{9}, environment information is learned online with prior information. In \cite{10}-\cite{12}, the online workload partition and distributed coverage control algorithm are proposed, and the coverage time is estimated for algorithm comparison.

In \cite{13}, a centralized coverage control algorithm is proposed based on temperature transfer equation. While ensuring the coverage efficiency, it can also deal with the coverage problem of non-convex regions. However, agents are prone to collision during movement using this coverage method,
and there may be the problem that multiple agents cover the same area, which results in the degradation of efficiency of transitional coverage.
For this reason, this paper aims to propose a distributed coverage algorithm by taking into account workload partition and dynamic coverage.
In brief, key contributions of this work are summarized as follows:
\begin{enumerate}
	\item Propose a distributed coverage algorithm by integrating region partition (to handle the uncertainties) with coverage control (to complete the workload).
	\item Provide theoretical guarentee on the finite-time completion of coverage task with the aid of Delaunay triangulation.
    \item Make a quantitative comparison with other existing coverage control algorithms through numerical simulations.
\end{enumerate}

The remainder of the paper is organized as follows. Section 2 presents the formulation of coverage problem in uncertain environment.
Then the distributed coverage algorithm using heat transfer equation is proposed in Section 3, followed by theoretical analysis in Section 4.
Numerical simulations are carried out in Section 5. Finally, we draw conclusions and discuss future work in Section 6.

\section{Problem Formulation}

From the perspective of coverage time, the coverage task of multi-agent systems can be transformed into a single objective optimization problem.
In this section, we develop distributed region partition method of multi-agent system, introduce the related concepts of coverage problem, and come up with the optimization goal.

\subsection{Region partition}

Consider the 2-dimensional workspace $D~\subset~\mathbb{R}^{2}$ with Lipschitz continuous boundary for $N$ mobile agents to complete the coverage task, and the location of the agent $i$ can be represented by spatial coordinates $\bf{s}_i=(x_i,y_i)$. In addition, any point in the workspace is expressed as $\mathbf{x}=(x,y)$. The communication topology of multi-agent systems can be expressed as an undirected graph $G=(V,E)$, where $V$ represents a collection of $N$ nodes, and each node corresponds to an individual agent. Here $E$ represents the set of connections between nodes, that is, the interactive relationship between agents. For $i,j \in \{1,2,3,...,N\}$, let $w_{ij} \in \{0,1\}$ represent the communication or connection status between Agent $i$ and Agent $j$. When $w_{ij}=1$, it means that Agent $i$ is connected with Agent $j$. In this problem, we assume that there is communication relationship between agents with overlapping edges in the subregion after Voronoi partition in \cite{14}. For the agent $i$, its Voronoi cell $D_i$ is defined as
\begin{equation} \label{1}
	D_i=\{\mathbf{x}\in D | d(\mathbf{x},\bf{s}_i)\le d(\mathbf{x},\mathbf{s}_j),\forall~j\in\{1,2,3,...,N\},i\ne j\},
\end{equation}
where $d(\mathbf{x},\bf{s}_i)=||\mathbf{x}-\bf{s}_i||$ is the Euclidean distance between the location of $\mathbf{x}$ and $\bf{s}_i$.
The partition method is shown in Figure 1.

\begin{figure}[h]
	\centering
	\includegraphics[width=\linewidth]{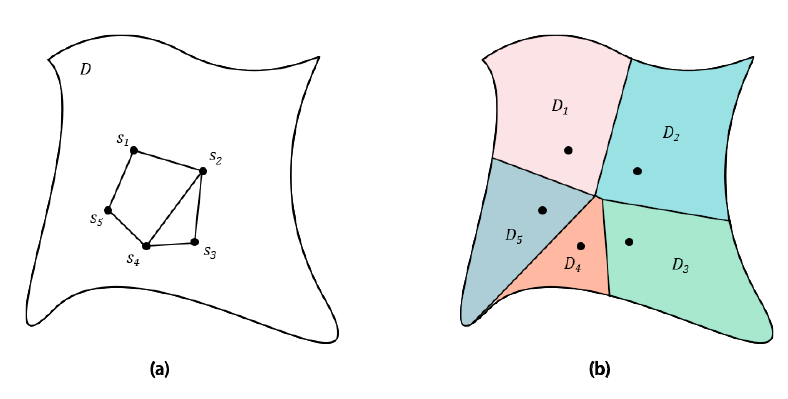}
	\caption{Voronoi partition method on the region $D$. (a): $\bf{s}_i$ represents the location of agent $i$, and the communication  topology of multi-agent systems can be expressed as the graph. (b): The subregion configuration after Voronoi partition.}
\end{figure}

With the assistance of Voronoi partition, the whole area can be divided into $N$ subregions with overlapping boundaries, and a single agent (say $i$) only works in its own Voronoi cell $D_i$. The relatively independent workspace enables the distributed implementation of coverage algorithm, which has the following benefits. First of all, the distance from any point in the Voronoi cell to its agent is always less than that to other agents, which is determined by the nature of Voronoi diagram, so that the agent only needs to be responsible for its nearest area to speed up coverage efficiency. Secondly, because of region partition, the agent only needs to know the work tasks in partial areas, which is consistent with most application scenarios, and relatively separated working region can avoid the collision among agents. This distributed design makes the system more robust. Finally, note that each agent moves along the gradient of potential energy field in \cite{13}. For multiple agents close to each other, when the gradient is extremely small, it may eventually lead to the problem that multiple agents pile up and stagnate, which may lead to redundant coverage. Voronoi partition can effectively alleviate this problem.  However, the work interval divided by distance can not guarantee the same workload, which will lead to unequal coverage time of agents and the wasting of coverage resources. The solutions are thus introduced in later sections.

\subsection{Coverage metrics}
After dividing the coverage region into multiple subregions, we need to determine the performance index, which is used to describe the coverage level of agents on the responsible region. Due to hardware limitations of agents, the coverage scope of each agent is limited at certain time.
\cite{15}~proposed a monotonically decreasing differentiable function $p_{i}(\mathbf{x},\bf{s}_i)$ to quantify the coverage capacity of agent
$i$ at the point $\mathbf{x}$ per unit time.
\begin{equation} \label{2}
	p_{i}(\mathbf{x},\bf{s}_i)=
	\begin{cases}
		P_{i}e^{-\lambda_{i}d(\mathbf{x},\bf{s}_i)}&d(\mathbf{x},\bf{s}_i) \leq r \\
		0& d(\mathbf{x},\bf{s}_i) > r
	\end{cases},
\end{equation}
where $P_{i}>0$ represents the maximum coverage capacity of the agent $i$, and $\lambda_i$ is the attenuation coefficient and $r$ is the effective coverage radius.
In the actual coverage process, the agent does not remain stationary. Assume that the coverage capacity of each agent remains unchanged. For agent $i$, its total coverage to point $\mathbf{x}\in D$ during the period $[0,t]$ is given by
\begin{equation} \label{3}
	c_{i}(\textbf{x},t)=\int_{0}^{t} p_{i}(\textbf{x},s_{i}(\tau))d\tau.
\end{equation}
For the region $D$, the workload is represented by $m(\textbf{x},t)$, which can be obtained as follows
\begin{equation} \label{4}
	m(\textbf{x},t) = \max\left(0,m_0(\textbf{x})-\sum_{i=1}^{N}c_i(\textbf{x},t)\right),
\end{equation}
where $m_0(\textbf{x})$ is the initial value of workload on coverage region. Thus, it can be expressed that the total workload of the area is
\begin{equation} \label{5}
	M(t) = \int_{D}^{}m(\textbf{x},t)d\textbf{x},\quad M(0) = \int_{D}^{}m_0(\textbf{x})d\textbf{x}.
\end{equation}
Through the above analysis, we can transform the task of area coverage into making the total workload equal to 0, i.e. $\lim_{t \to \infty} M(t)=0$. However, in addition to completing the coverage task, it is also required to achieve region coverage in the shortest time.
Let $T^*$ represent the optimal time of covering the whole region and $T$ is the actual coverage time. Note that the coverage velocity of each agent is equal, and it is denoted by $v$. In light of (\ref{2}), $v$ can be computed by
\begin{equation} \label{6}
	v=\int_{d(\textbf{x},\bf{s}_i) \leq r}^{} p_{i}(\textbf{x},s_{i})d\textbf{x}.
\end{equation}
$T^*$ can be computed by
\begin{equation} \label{7}
	T^*=\frac{1}{Nv}M(0),
\end{equation}
which is unavailable due to environmental uncertainties, and the error is given by $\Delta T=T-T^*$. The objective of this study is to design a distributed online coverage scheme to minimize the actual coverage time.

\section{Coverage Control Algorithm}
In this section, we present a coverage control method for multi-agent systems to clear the workload in a given region by using heat transfer equation.

\subsection{Heat field-induced control}
Instead of generating global temperature field~\cite{13}, the coverage of each Voronoi cell $D_i$ is taken into account by solving heat transfer equation locally. While adopting the distributed method, it also retains the global advantage of temperature field on the coverage area. The coverage for a certain time can be regarded as a static temperature field with adiabatic boundary.
For region $D_i$, its temperature field $T_i$ at time $t$ satisfies
\begin{equation} \label{8}
	\alpha\Delta T_i(\textbf{x},t)+h_i(\textbf{x},t)=\beta T_i(\textbf{x},t),
\end{equation}
with the Neumann boundary condition
\begin{equation} \label{9}
	\frac{\partial T_i(\textbf{x},t)}{\partial \mathbf{n} }=0,
\end{equation}
where $\alpha>0$, $\beta>0$, $\Delta$ is a Laplace operator, $\mathbf{n}$ is the outward normal vector. Note that the above equation is independent of time, and the boundary of subregions results from Voronoi partition. In the heat equation (\ref{8}), $\alpha\Delta T_i$ represents the net heat flow. Thermal diffusivity $\alpha$ represents the ability of the temperature of all parts of the object, and it tends to be the same when the object is heated or cooled. The larger the value, the smaller the temperature difference in all parts of the object. To deal with the coverage problem, we convert the workload level into internal heat source $h_i(\textbf{x},t)$
\begin{equation} \label{10}
	h_{i}(\textbf{x},t)=\frac{m(\textbf{x},t)}{\bar{m}_i}, \quad x \in D_i,
\end{equation}
where $\bar{m}_i$ represents the maximum workload in the initial case for region $D_i$.
\begin{equation} \label{11}
	\bar{m}_i =  \max(m_0(\textbf{x})).
\end{equation}
When the agent does not cover a point $\textbf{x}$ in the area, the energy of heat source releases heat flow to the low-temperature area. When the agent completes the coverage of point $\textbf{x}$, it will no longer provide energy as a heat source. Similar to \cite{13}, the term $\beta T_i$ governs cooling over the whole region. It can be considered as the difference between the current temperature and $0$ degree. The larger the difference is, the more remarkable the cooling effect is. Parameter $\beta$ is the global cooling coefficient, which controls the global cooling effect.

According to Fourier theorem, the heat flow vector at any point in the system is directly proportional to the temperature gradient at that point, but in the opposite direction, always along the direction of temperature reduction. The motion of the agent can be compared with the flow of heat flow, but the motion direction is opposite to the heat flow vector, which makes the agent always move in the direction of heat source, that is, in the direction towards more workload. Therefore, the dynamics of agent $i$ is given as
\begin{equation} \label{12}
	\bf{\dot{s}_i}=u_{i},
\end{equation}
where the control input $\bf{u}_i$ is given by
\begin{equation} \label{13}
	\bf{u}_{i}=V_i\frac{\bigtriangledown T_i(\bf{s}_i(t),t) }{||\bigtriangledown T_i(\bf{s}_i(t),t) ||},
\end{equation}
with the constant speed $V_i$.
\iffalse
\begin{algorithm}[h]
	\caption{Distributed coverage control via heat field}\label{algorithm1}
	\KwData{For each agent, $i \in \{1,2,3,...,N\}$, sub work region $D_i$, agent's initial position $\bf{s}_i(0)$, initial workload $m_0(\textbf{x})$, time interval $\Delta t$.}
	\KwResult{Completion flag.}
	$t \leftarrow 0$\;
	\For{Agent $i$}{$D_i \leftarrow \textbf{Eq.(\ref{1})}$\;}
	\tcp{Keep cycling until an agent completes the task}
	\While{$1$}{
		\tcp{Upgrade workload}
		$M(t+\Delta t) \leftarrow \textbf{Eq.(\ref{5})}$\;
		$t\leftarrow t+\Delta t$\;
		\If{$M(t) <= 0$}{
			return 1\;}
		\For{Agent $i$}{
			\tcp{Calculate heat equation and update location}
			$T_i \leftarrow \textbf{Eq.(\ref{8})}$\;
			$u_i(t) \leftarrow \textbf{Eq.(\ref{13})}$\;
			$\bf{s}_i(t) \leftarrow \bf{s}_i(t- \Delta t)+\Delta t \cdot u_i(t)$\;}
		}
\end{algorithm}
\fi

\begin{algorithm}[t!]
	\caption{Coverage Control on General Subregions}\label{algorithm1}
	Partition the region $D$ with (\ref{1}) \;
	\tcp{Agent $i$ performs simultaneously as follow }
	\While{$M(t)>0$}{
			Calculate the temperature field on $D_i$ with (\ref{8})\;
			Compute control input $u_i$ with (\ref{13})\;
			Update the total workload $M(t)$ with (\ref{5})\;}
\end{algorithm}

\subsection{Implementation of cooperative coverage algorithm}
Section 2.1 presents a method of workspace partition based on Voronoi diagram, but there is a problem left, that is, even if Voronoi partition can ensure the minimum distance from the point in the region to the agent, it can not ensure the equal workload of each workspace. In extreme cases, the time required for agents to cover their respective areas varies greatly. This may result in the fact that some agents complete their workload in advance and are in the idle condition, which causes the wasting of resources and decrease of efficiency. In this section, we propose a method of iterative partition, which can make full use of agent resources while ensuring the division of subregions, and finally complete the overall coverage mission in finite time.

Let $t$ denote the time from the start of the coverage to the current time, and $t'$ represents the time from the start of iteration to the current time. Suppose that in the $k$-th iteration, agent $i$ first completes the coverage task of its subregion $D^k_i$, it takes total time $T^k$. We need to determine the segmentation result of iteration $k+1$. Before partition, we need to update the workload $m_0^{k+1}(\textbf{x})$. The workload at the end of last iteration is used as the initial value of the next iteration.
\begin{equation} \label{14}
	m_0^{k+1}(\textbf{x})=m^{k}(\textbf{x},T^k),
\end{equation}
where $m^{k}(\textbf{x},T^k)$ is the value of function $m^{k}(\textbf{x},t')$ at time $T^k$. Similar to (\ref{4}), $m^{k}(\textbf{x},t')$ is defined as
\begin{equation} \label{15}
	m^{k}(\textbf{x},t')=\max\left(0,m_0^{k}(\textbf{x})-\sum_{i=1}^{N} \int_{0}^{t'} p_{i}(\textbf{x},s_{i}^k(\tau))d\tau\right).
\end{equation}
Then agent $i$ moves to the subregion of agent $j$ that it can communicate with. Besides, the remaining workload in $D_j^k$ is the largest compared with that of other neighbors of agent $i$. Thus, the number $j$ can be determined by
\begin{equation} \label{16}
	j=\arg \underbrace{\max }_{j} w_{ij}\int_{D_j^k}^{} m_{0}^{k+1}(\textbf{x})d\textbf{x}.
\end{equation}
Set the initial position of agent $i$ in the $k+1$ iteration as the point with the largest workload in area $D_j^k$, and assume that the moving time is ignored, because the moving speed of agent without coverage can be much greater than $V_i$, that is, the time consumption between iterations is ignored
\begin{equation} \label{17}
	\bf{s}_i^{k+1}(0)=\arg \underbrace{\max }_{\textbf{x}} m_0^{k+1}(\textbf{x}), \quad \textbf{x} \in D_j^k.
\end{equation}
After that, we can use the (\ref{1}) to get new region partition results $D_i^{k+1}$. Since the solution of heat transfer equation in Section 3.1
is not directly related to time, it can be used to control the movement of agent, but the difference is that when any agent completes the subregion, it will be regarded as the end of iterations and go to the next iteration until $M(t)$ approaches 0.
In order to achieve finite-time coverage, we propose a temperature field update method (i.e., Maximal Update). In each iteration, Maximal Update is used after Real-time Update. Generally, the time $T_u^k$ of Real-time Update can be set for each iteration. After that, when $t'> T_u^k$, Maximal Update is adopted until the end of this iteration.

When the temperature field remains unchanged, the agent moves towards local maximum point according to (\ref{13}). For the subregion where each agent is located, Delaunay triangulation in \cite{16} can be carried out to obtain the triangular mesh of the region. For the triangular region with local maximum points $(x_{max}, y_{max})$, there is a minimum coverage circle centered on local maximum point, which can cover the triangular region, and it satisfies
\begin{equation} \label{18}
(x-x_{max})^2+(y-y_{max})^2=r_{min}^2, \quad r_{min}\leq H_{max} \leq <r.
\end{equation}
Here $r_{min}$ is the radius of the minimum coverage circle, which is equivalent to the maximum distance from the local maximum point to the vertex of the triangle. $H_{max}$ is the maximum mesh edge mesh. At the same time, it is required that the coverage radius of the agent should be greater than $H_{max}$, which ensures that once the agent enters the triangular area, the whole area will be within the coverage range.
When the agent is moving towards the local maximum point, the temperature field remains unchanged. When the agent enters the triangular area where the local maximum value is located, it is ensured that the coverage of maximum point by defining the coverage circle, As shown in the Figure 2. At this time, we keep the agent stationary and update the temperature field after completing the work task of the triangular area. The agent runs to the next local maximum point according to the gradient of temperature field. This iteration continues until the end. (For details, refer to Algorithm~2).
Due to less workload on subregions at the end of coverage, we set the maximum iterations $k_{max}$. When $k=k_{max}$, the agent is only responsible for clearing workload on its subregion until all workload is completed. The complete coverage algorithm is presneted in Algorithm 3.
\begin{figure}[t!]
	\centering
	\includegraphics[width=12cm]{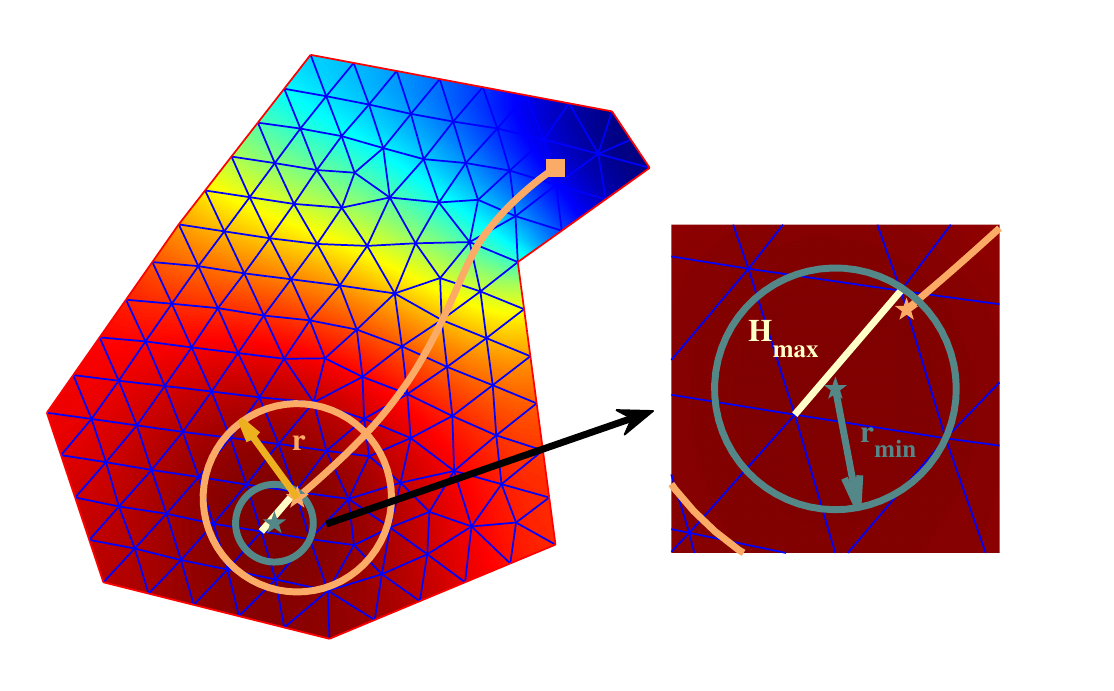}
	\caption{Illustration of Maximal Update Method: blackish green pentagram is the local maximum point and its minimun coverage circle. The trajectory of agent is shown in the figure and the pentagram is the location that agent reach the triangular area. In addition, the figure shows the length relationship between $r$, $r_{min}$, $H_{max}$.}
\end{figure}

%For symbolic unification, we use $T_i^k(\textbf{x},t')$ instead of $T_i(\textbf{x},t)$ %and $u_i^k(t')$ instead of $u_i(t)$. See detail algoritm in \textbf{Algorithm 3}.

\begin{algorithm}[t]
	\caption{Coverage Control on Triangulated Subregions}\label{algorithm2}
	Partition the region $D$ with (\ref{1}) \;
	\tcp{Agent $i$ performs simultaneously as follow }
	\While{$\int_{D_i^k}^{} m^{k}(\textbf{x},t')d\textbf{x}>0$ and no other agents complete workload}{
		\eIf{$t'\leq T_u^k$}{Calculate the temperature field on subregion $D_i^k$ with (\ref{8})\;
		Update $s_i$ with (\ref{12}) and (\ref{13}) and $m^{k}(\textbf{x},t')$ with (\ref{15})\;}
		{
		\eIf{agent $i$ is at the boundary of triangulation region with local maximum}
		{Keep static and clear triangle workload\;
		Calculate the temperature field on subregion $D_i^k$ with (\ref{8})\;
		Update $s_i$ with (\ref{12}) and (\ref{13}) and  $m^{k}(\textbf{x},t')$ with (\ref{15})\;}
		{
		Update $s_i$ with (\ref{12}) and (\ref{13}) and  $m^{k}(\textbf{x},t')$ with (\ref{15})\;}}
	}
\end{algorithm}

\begin{algorithm}[t]
\caption{Distributed Coverage Control on Global Region}\label{algorithm3}
$k \leftarrow 1$\;
\While{$M(t)>0$}{
	\eIf{$k<k_{max}$}{
		Run Algorithm 2 \;
		Determine the subregion and move to $D_j^k$ with (\ref{16})\;
		Drive agent to the maximum workload with (\ref{17})\;
        $k \leftarrow k+1$\;}
{Run Algorithm 1\;}}
\end{algorithm}

\section{Theoretical Analysis}

In this section, we provide theoretical results on the finite-time coverage of multi-agent systems using the proposed thermal field-induced coverage control method.

\begin{prop}
$\forall \Delta t \geq0$, $\textbf{x} \in D_i$, $i\in \textsc{Z}^{+}$, $h_i(\textbf{x},t)=0 \Longleftrightarrow\bigtriangledown T_i(\textbf{x},t+\Delta t)=0$.
\end{prop}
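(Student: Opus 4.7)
The plan is to treat (\ref{8}) with the Neumann condition (\ref{9}) as a static elliptic Helmholtz-type boundary value problem on the Voronoi cell $D_i$, with $h_i$ as the forcing and $T_i$ as the response, and to use the monotone decay of $m$ from (\ref{4})--(\ref{5}) to absorb the time shift $\Delta t \ge 0$. Throughout I read both sides of the equivalence in the natural global sense: $h_i(\cdot, t) \equiv 0$ and $\nabla T_i(\cdot, t+\Delta t) \equiv 0$ on $D_i$.

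For the forward implication I would start from $h_i(\cdot, t) \equiv 0$ on $D_i$. Since $m(\textbf{x}, \cdot)$ is pointwise non-increasing by (\ref{4}), $h_i(\cdot, t+\Delta t) \equiv 0$ for every $\Delta t \ge 0$, so (\ref{8}) collapses to $\alpha \Delta T_i = \beta T_i$. Testing this against $T_i$, integrating over $D_i$, and applying Green's first identity together with (\ref{9}) to drop the boundary flux yields
\[
\alpha \int_{D_i} \|\nabla T_i(\textbf{x}, t+\Delta t)\|^{2} \, d\textbf{x} + \beta \int_{D_i} T_i^{2}(\textbf{x}, t+\Delta t) \, d\textbf{x} = 0.
\]
Because $\alpha, \beta > 0$, both integrands vanish pointwise, so $T_i(\cdot, t+\Delta t) \equiv 0$ and in particular $\nabla T_i(\cdot, t+\Delta t) \equiv 0$ on $D_i$.

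For the converse I would assume $\nabla T_i(\cdot, t+\Delta t) \equiv 0$ on the connected set $D_i$ for every $\Delta t \ge 0$. Then $T_i(\cdot, t')$ is a spatial constant $c(t')$ for each $t' \ge t$, and substituting back into (\ref{8}) gives $h_i(\cdot, t') \equiv \beta c(t')$, a purely time-dependent function on $D_i$. Via (\ref{10}) this forces $m(\textbf{x}, t')$ to be spatially constant on $D_i$ at every $t' \ge t$. However, by (\ref{2}) and (\ref{4}) the instantaneous rate of change of $m$ at a point equals the strictly radially decaying coverage rate $\sum_j p_j(\textbf{x}, \textbf{s}_j(t'))$, which is spatially non-uniform as soon as any agent is within range; maintaining spatial constancy of $m$ for every $t' \ge t$ therefore forces the coverage rate to vanish identically on $D_i$, and this sustained-zero update pins $c(t) = 0$, i.e., $h_i(\cdot, t) \equiv 0$.

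The main obstacle is this converse direction: the Helmholtz BVP on $D_i$ admits the constant pair $(h_i, T_i) = (\beta c, c)$ as a legitimate solution for any $c \ge 0$, so excluding $c > 0$ cannot be read off from the elliptic equation in isolation and has to be extracted from the coupled workload dynamics (\ref{2})--(\ref{4}), leveraging the ``for all $\Delta t \ge 0$'' quantifier to rule out positive spatial constants that ongoing nonuniform coverage would generically destroy.
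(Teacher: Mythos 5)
Your overall architecture is the same as the paper's: for the forward direction, kill the source term using the monotone decay of $m$ and reduce (\ref{8}) to the homogeneous Neumann problem, concluding $T_i \equiv 0$; for the converse, observe that a spatially constant $T_i$ makes $h_i$ (hence $m$) spatially constant via (\ref{8}) and (\ref{10}), and then use the spatial non-uniformity of the coverage rate to exclude a positive constant. Your forward direction is in fact tighter than the paper's: where the paper simply asserts that $T_i=0$ ``is the solution'' of $\alpha\Delta T_i=\beta T_i$ with (\ref{9}), you actually establish uniqueness by testing against $T_i$ and applying Green's identity, which is precisely the step the paper leaves implicit. Your reading of the equivalence in the global sense ($h_i\equiv 0$ on $D_i$ iff $\nabla T_i\equiv 0$ on $D_i$) also matches how the paper uses the result.

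The one place your converse does not close as written is the final inference. From spatial constancy of $m(\cdot,t')$ you conclude that the coverage rate must vanish identically on $D_i$, and then assert that ``this sustained-zero update pins $c(t)=0$.'' That implication is a non-sequitur: if no agent were covering $D_i$, the workload would simply sit at its positive constant value forever, and the constant pair $(h_i,T_i)=(\beta c, c)$ with $c>0$ would persist --- exactly the degenerate solution you yourself flagged as the obstacle. What actually closes the argument is that the coverage rate \emph{cannot} vanish identically on $D_i$: agent $i$ sits at $\mathbf{s}_i(t')\in D_i$ (it is its own Voronoi generator), and by (\ref{2}) its coverage rate is strictly positive and strictly radially decaying on $D_i\cap\{d(\mathbf{x},\mathbf{s}_i)\le r\}$, hence never spatially uniform on $D_i$. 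A positive spatially constant $m$ therefore cannot be maintained over any time interval; the only way the $\max(0,\cdot)$ update in (\ref{4}) preserves spatial constancy is if $m\equiv 0$ already, which gives $c(t)=0$ and $h_i(\cdot,t)\equiv 0$. With that substitution your proof is sound and matches the paper's intent; the paper's own version of this step is itself loose (it even misstates $m$ as ``monotonically increasing''), but it does lean on coverage being ongoing rather than absent, which is the ingredient your last sentence drops.
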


\begin{proof}
For $h_i(\textbf{x},t)=0 \Rightarrow \bigtriangledown T_i(\textbf{x},t)=0$ and $\bigtriangledown T_i(\textbf{x},t+\Delta t)=0$. When $h_i(\textbf{x},t)=0$,  $\textbf{x} \in D_i$, rewrite the (\ref{8}) as follows:
$$
	\alpha(\frac{\partial^2 T_i(\textbf{x},t)}{\partial x^2}+\frac{\partial^2 T_i(\textbf{x},t)}{\partial y^2})=\beta T_i(\textbf{x},t).
$$
Obviously, $T(\textbf{x},t)=0$ is the solution of the partial differential equation and satisfies the boundary conditions at the same time. From which we can deduce
$$
\frac{\partial T_i(\textbf{x},t)}{\partial x}=0, \quad \frac{\partial T_i(\textbf{x},t)}{\partial y}=0.
$$
Because $h$ is a monotonically decreasing function with $t$ and $h \geq 0$, we can get $h_i(\textbf{x},t+\Delta t)=0$ easily.
Likewise, we can obtain $\bigtriangledown T_i(\textbf{x},t+\Delta t)=0$. The conditions $\bigtriangledown T_i(\textbf{x},t)=0$ and $\bigtriangledown T_i(\textbf{x},t+\Delta t)=0$ leads to  $h_i(\textbf{x},t)=0$.
From
$$
\bigtriangledown T_i(\textbf{x},t)=0 \Rightarrow \Delta T_i(\textbf{x},t)=0, \quad \bigtriangledown T_i(\textbf{x},t+\Delta t)=0 \Rightarrow \Delta T_i(\textbf{x},t+\Delta t)=0,
$$
we can get
$$
h_i(\textbf{x},t)=\beta T_i(\textbf{x},t)=c_1, \quad h_i(\textbf{x},t+\Delta t)=\beta T_i(\textbf{x},t+\Delta t)=c_2,
$$
where $c_1$ and $c_2$ are constants. Owing to $c_1 \neq 0$, we have
$$
h_i(\textbf{x},t+\Delta t)=\frac{m(\textbf{x},t+\Delta t)}{\hat{m}_i},
$$
where $\frac{m(\textbf{x},t)}{\bar{m}_i}=c_1$. However, $m(\textbf{x},t)$ is a monotonically increasing function and $c_1 \neq 0$, leading
to $\frac{m(\textbf{x},t+\Delta t)}{\bar{m}_i} \neq c_1$, which conflicts with $h_i(\textbf{x},t+\Delta t)=c_2$, even $c_2=c_1$. According to
the above counter example, it is concluded that $c_1=0$. Thus, the proof is completed.
\end{proof}

\begin{remark}
$\forall \Delta t >0$, $\textbf{x} \in D_i$. When $h_i(\textbf{x},t)=0$, the coverage of region $D_i$ is finished. $\bigtriangledown T_i(\textbf{x},t)=0$ and $\bigtriangledown T_i(\textbf{x},t+\Delta t)=0$ means the gradient of temperature field is zero everywhere, that is, the agent stops moving. It can be concluded that when the gradient of temperature field becomes zero at a certain time and then all become zero (the agent stops moving at a certain time and then remains stationary), the coverage of the area is completed.
\end{remark}

\begin{lemma}\label{maximum has value}
$\forall x_0\in D_i$, $i\in \textsc{Z}^{+}$, $H(T_i(x_0,t))<0\Longrightarrow h_i(x_0,t)>0$,
where $H(T_i(x_0,t))<0$ is Hessian matrix of $T_i(x_0,t)$.
\end{lemma}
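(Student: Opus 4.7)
The plan is to read off $h_i(x_0,t)$ directly from the governing PDE (\ref{8}) rewritten as
\[
h_i(x_0,t) \;=\; \beta\, T_i(x_0,t)\;-\;\alpha\, \Delta T_i(x_0,t),
\]
and then show that the right-hand side is strictly positive under the hypothesis $H(T_i(x_0,t))<0$. Because $\alpha>0$ and $\beta>0$ are fixed, the argument reduces to two signs: $\Delta T_i(x_0,t)<0$ and $T_i(x_0,t)\geq 0$.

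For the Laplacian, I would use the standard identification $\Delta T_i(x_0,t) = \operatorname{tr} H(T_i(x_0,t))$ in two dimensions. Since the hypothesis $H(T_i(x_0,t))<0$ is the usual shorthand for \emph{negative definite} (both eigenvalues strictly negative), the trace is strictly negative, so $-\alpha\,\Delta T_i(x_0,t) > 0$. This step is elementary linear algebra and will be a one-line observation.

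The nonroutine step is establishing $T_i(x_0,t) \geq 0$. I would prove it globally on $D_i$ by a minimum principle argument applied to the elliptic operator $L u := -\alpha\Delta u + \beta u$ appearing in (\ref{8}), with source $h_i\geq 0$ (immediate from (\ref{10}) because $m\geq 0$ and $\bar m_i>0$) and homogeneous Neumann data from (\ref{9}). Suppose for contradiction that $T_i$ attains a negative minimum at some $x_*\in \overline{D_i}$. If $x_*$ lies in the interior, then $\nabla T_i(x_*)=0$ and $\Delta T_i(x_*)\geq 0$, so $L T_i(x_*) = -\alpha\Delta T_i(x_*) + \beta T_i(x_*) < 0$, contradicting $L T_i = h_i \geq 0$. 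If the minimum is attained on $\partial D_i$, Hopf's boundary point lemma would give $\partial T_i/\partial\mathbf{n}<0$ there, contradicting the Neumann condition (\ref{9}); alternatively one smooths the argument by perturbing $T_i$ with a small $\varepsilon$-quadratic and letting $\varepsilon\to 0$. This forces $T_i \geq 0$ throughout $D_i$, and in particular $\beta T_i(x_0,t) \geq 0$.

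Combining the two facts yields
\[
h_i(x_0,t) \;=\; \beta\, T_i(x_0,t) \;+\; \bigl(-\alpha\, \Delta T_i(x_0,t)\bigr) \;>\; 0,
\]
which is the desired conclusion. The main obstacle I anticipate is the boundary case of the minimum principle, since the Neumann condition (\ref{9}) together with the non-smoothness of Voronoi boundaries makes a direct appeal to Hopf's lemma slightly delicate; a careful interior-approximation argument, or a remark that the Voronoi cell boundary is piecewise $C^1$ with outward normal defined a.e., should suffice. Everything else is bookkeeping from the PDE.
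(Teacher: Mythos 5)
Your proposal follows essentially the same route as the paper: read $h_i = \beta T_i - \alpha\Delta T_i$ off equation (\ref{8}), get $\Delta T_i(x_0,t)<0$ from negative definiteness of the Hessian (the paper uses the negative diagonal entries, you use the trace — equivalent), and conclude from the sign of $T_i$. The only difference is that where the paper merely asserts $T_i>0$ by appeal to ``the physical principle of heat transfer equation,'' you supply an actual minimum-principle argument with the Neumann data, which is a strictly more rigorous justification of that step rather than a different approach.
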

\begin{proof}
In the 2D plane, we can get Hessian matrix of $T_i(x_0,t)$.
$$
	H(T_i(x_0,t))=\begin{bmatrix}
		 \frac{\partial^2 T_i(x_0,t)}{\partial x\partial x} & \frac{\partial^2 T_i(x_0,t)}{\partial x\partial y}\\
		\frac{\partial^2 T_i(x_0,t)}{\partial y\partial x}&\frac{\partial^2 T_i(x_0,t)}{\partial y\partial y}
	\end{bmatrix}.
$$
Since $H(T_i(x_0,t))$ is negative definite, we can get $\frac{\partial^2 T_i(x_0,t)}{\partial x\partial x}<0$ and $\frac{\partial^2 T_i(x_0,t)}{\partial y\partial y}<0$. It is concluded that
$$
	\alpha\Delta T_i(\textbf{x},t) = \alpha(\frac{\partial^2 T_i(x_0,t)}{\partial x\partial x}+\frac{\partial^2 T_i(x_0,t)}{\partial y\partial y})<0.
$$
Because of the existence of the term $\beta T_i(\textbf{x},t)$ in the equation, the temperature of whole field is greater than zero. The specific reasons can be seen in the physical principle of heat transfer equation and (\ref{10}). So, we can get
$$
T_i(x_0,t)>0.
$$
For point $x_0$, (\ref{8}) can be written as
$$
\alpha\Delta T_i(x_0,t)+h_i(x_0,t)=\beta T_i(x_0,t).
$$
Through the above analysis, it is concluded that the right side of the equation is less than 0 and $\alpha\Delta T_i(x_0,t)$ is less than 0, we can get $h_i(x_0,t)>0$, which completes the proof.
\end{proof}

\begin{lemma}
Each agent can reach the triangular area with local maximum in finite time using Maximal Upgrade Method.
\end{lemma}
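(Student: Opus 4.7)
The plan is to exploit the defining feature of Maximal Update: the temperature field $T_i$ is frozen for the duration of this phase, so agent $i$ obeys the autonomous ODE $\dot{\mathbf{s}}_i = V_i\,\nabla T_i(\mathbf{s}_i)/\|\nabla T_i(\mathbf{s}_i)\|$ at constant speed $V_i$ on the compact Voronoi cell $D_i$. My first step would be to use $T_i$ itself as an ascending Lyapunov-type function along the trajectory: a direct computation gives $\frac{d}{dt}T_i(\mathbf{s}_i(t)) = V_i\,\|\nabla T_i(\mathbf{s}_i(t))\|$, so $T_i(\mathbf{s}_i(t))$ is strictly increasing whenever $\nabla T_i\neq 0$. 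The Neumann boundary condition (\ref{9}) guarantees that $\nabla T_i$ is tangent to $\partial D_i$, so the trajectory cannot leave $D_i$; combined with the upper bound $\sup_{D_i} T_i < \infty$, this forces the omega-limit set of the trajectory to lie in the critical set $\{\mathbf{x}\in D_i : \nabla T_i(\mathbf{x},t)=0\}$.

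Next I would identify these limiting critical points with the local maxima that live inside the Delaunay triangles of (\ref{18}). Lemma~\ref{maximum has value} (contrapositively) tells us that wherever the Hessian of $T_i$ is negative definite, $h_i>0$, so strict local maxima coincide with points still carrying workload; the Delaunay mesh constructed in Section~3.2 places each such maximum in a triangle whose minimum covering circle has radius $r_{\min}\leq H_{\max}<r$, which is precisely the ``triangular area with local maximum'' named in the statement.

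The core of the argument is then a quantitative arrival-time estimate. For any $\epsilon>0$ smaller than $r_{\min}$, let $U_\epsilon$ denote the open $\epsilon$-neighborhood of the critical set of $T_i$ in $D_i$. By continuity of $\nabla T_i$ on the compact set $D_i\setminus U_\epsilon$, there exists $c_\epsilon>0$ with $\|\nabla T_i\|\geq c_\epsilon$ there, and hence
\begin{equation*}
	V_i\,c_\epsilon\,\bigl|\{\tau\leq t : \mathbf{s}_i(\tau)\notin U_\epsilon\}\bigr|
	\;\leq\; T_i(\mathbf{s}_i(t))-T_i(\mathbf{s}_i(0))
	\;\leq\; \sup_{D_i}T_i.
\end{equation*}
Consequently the agent leaves $D_i\setminus U_\epsilon$ within time $t_\epsilon := (\sup_{D_i}T_i)/(V_i c_\epsilon)$ and, since $\epsilon<r_{\min}$, the point it first enters $U_\epsilon$ at already lies inside the minimum covering circle of a Delaunay triangle containing a local maximum. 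Thus the agent reaches the target triangular region in time at most $t_\epsilon$.

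The main obstacle I anticipate is ruling out accumulation at a saddle point or on the boundary rather than convergence to a bona fide local maximum. I would dispose of this by two observations: the stable manifolds of non-maximum critical points are lower-dimensional and hence negligible for generic initial data chosen by (\ref{17}); and the Neumann condition confines ascent to interior local maxima, where Lemma~\ref{maximum has value} guarantees positive workload and thus a well-defined Delaunay triangle. Once this technicality is handled, the bound $t_\epsilon$ depends only on the fixed quantities $V_i$, $\sup_{D_i}T_i$, and the mesh geometry, giving the asserted finite-time reachability.
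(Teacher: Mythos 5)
Your argument is essentially the paper's: both hinge on the monotonicity of $T_i(\mathbf{s}_i(t))$ along the normalized gradient flow (the paper packages this as the Lyapunov function $V_i(t)=T_i(x_0)-T_i(\mathbf{s}_i(t))$ with $\dot V_i=-V_i\|\nabla T_i(\mathbf{s}_i(t))\|$), followed by converting convergence toward the maximum into finite-time entry of the ball of radius $r_{\min}$. Your compactness bound $t_\epsilon=(\sup_{D_i}T_i)/(V_i c_\epsilon)$ is a sharper, explicitly quantitative version of the paper's appeal to asymptotic stability plus the definition of the limit, and your explicit handling of saddle points and the Neumann boundary is a refinement of issues the paper's proof leaves unaddressed.
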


\begin{proof}
In Maximal Upgrade Method, before the update, the temperature field remains unchanged. $T_i(x_0)$ is the temperature value of local maximum point in region $D_i$, $T_i(\bf{s}_i(t))$ is the temperature value of each point on the trajectory of agent $i$.
Contruct the following  Lyapunov-Krasovskii function:
$$
V_i(t) = T_i(x_0)-T_i(\bf{s}_i(t)) .
$$
Calculating the time derivative of $V_i(t)$ yields
$$
\begin{aligned}
	\dot{V}_i(t) &=-\bigtriangledown T_i(\bf{s}_i(t))\dot{l}_i^T(t) \\
	&=-\bigtriangledown T_i(\bf{s}_i(t))V_i\frac{\bigtriangledown T_i^T(\bf{s}_i(t))}{||\bigtriangledown T_i(\bf{s}_i(t))||}\\
	&=-V_i||\bigtriangledown T_i(\bf{s}_i(t))||.
\end{aligned}
$$
where $\dot{s}_i(t)$ is defined in (\ref{13}). We can easily get $V_i(t)>0$ and $\dot{V}_i(t)<0$. Hence, the system (\ref{12}) is asymptotical stable. In other words, the agent can reach the local maximum point when $t \longrightarrow \infty$. According to Lyapunov asymptotical stability and the definition of limit, $\forall \epsilon>0$, $\exists T>0$, when $t>T$, one has $||\bf{s}_i(t)-x_0||\leq\epsilon$. Take $\epsilon$ equals the radius of minimum covering circle $r_{min}$, which completes the proof.
\end{proof}
\begin{lemma}
Delaunay triangulation enables each agent to complete workload on its subregion in finite time.
\end{lemma}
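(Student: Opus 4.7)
The plan is to reduce the claim to a finite sum of finite-time episodes, one per triangle produced by the Delaunay triangulation of $D_i$. Since $D_i$ is bounded with Lipschitz boundary, the Delaunay triangulation generates finitely many triangles $\{\triangle_1,\ldots,\triangle_{N_i}\}$, so it suffices to show that (a) the agent needs to service only finitely many of them and (b) servicing each one consumes only finite time.

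First I would dispose of (b). The key geometric fact, already built into condition (\ref{18}), is that $r_{\min}\le H_{\max}<r$, so once the agent reaches the local maximum of a triangle, the whole triangle lies inside its coverage disk. During the subsequent stationary phase the pointwise coverage rate satisfies $p_i(\mathbf{x},\bf{s}_i)\ge P_i e^{-\lambda_i r}>0$ uniformly on the triangle, while the residual workload is bounded by $\bar m_i\cdot\mathrm{Area}(\triangle_j)$. Hence the triangle is cleared after a time at most $\bar m_i\,\mathrm{Area}(\triangle_j)/(P_i e^{-\lambda_i r})$. Together with Lemma~4.2, which already certifies that the frozen-field gradient flow reaches the triangle in finite time, this gives a finite upper bound on one episode.

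Next I would handle (a) by a monotone-depletion argument. The workload $m(\textbf{x},t)$ is non-increasing in $t$ by (\ref{4}), so once a triangle has been cleared it stays cleared. By the contrapositive of Lemma~\ref{maximum has value}, a point where $h_i(\mathbf{x},t)=0$ cannot host a strict (Hessian-negative-definite) local maximum of $T_i(\cdot,t)$; so after the stationary phase the triangle just processed no longer contains a local maximum, and by Proposition~4.1 the gradient vanishes there thereafter. Thus the set of triangles still harbouring a local maximum strictly shrinks at each episode, and since there are only $N_i$ triangles in total, the Maximal-Update loop terminates after at most $N_i$ episodes. When it terminates, $\nabla T_i(\textbf{x},t)=0$ on all of $D_i$, which via Proposition~4.1 and Remark~4.1 is equivalent to $h_i\equiv 0$, i.e. the coverage of $D_i$ is complete. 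Summing the per-episode bounds and the per-reach times of Lemma~4.2 produces an explicit finite total time.

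The main obstacle, and the step I would spend the most care on, is the monotone-depletion claim: I need to rule out the possibility that clearing one triangle introduces a brand-new strict local maximum of $T_i$ inside a triangle that had none before, which would let the process revisit the same triangle repeatedly and potentially not terminate. The way I would secure this is by noting that a local maximum of $T_i(\cdot,t)$ can only appear where $h_i(\cdot,t)>0$ (Lemma~\ref{maximum has value}), together with the fact that $h_i$ is monotonically non-increasing in $t$: the support of $h_i(\cdot,t)$ only shrinks, so no new local maximum can ever appear in a previously cleared triangle. Combined with the finiteness of the triangulation, this yields termination, and the finite-time bounds from (b) yield the overall finite-time statement.
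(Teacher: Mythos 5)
Your proposal follows essentially the same route as the paper's proof: finitely many Delaunay triangles, finite reach time per triangle via Lemma 4.2 together with the coverage-circle condition (\ref{18}), a finite clearing time during the stationary phase, and Lemma \ref{maximum has value} to tie local maxima of $T_i$ to remaining workload. The only substantive difference is that you make explicit the termination argument (a cleared triangle cannot regain a local maximum because the support of $h_i$ only shrinks over time), a step the paper's proof leaves implicit, and you bound the clearing time by a uniform lower bound on $p_i$ over the whole triangle rather than by its value at the single point $x_0$.
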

\begin{proof}
	Since the use time $T_u^k$ of Real-time Update is fixed, its value does not affect the proof result. In Maximal Update, before updating the temperature field each time, the number of local maximum points corresponding to the current temperature field is limited, which is determined by the limited number of triangles in the triangulation result of the region. For each update of the temperature field, according to Lemma 4.2., $\exists T_1>0$, $||\bf{s}_i(T_1)-x_0||\leq r_{min}$. Then, according to (\ref{18}), the agent can cover the triangular area where the local maximum point is located after stopping the movement, and the coverage time is
$$
T_2=\frac{m^k(x_0)}{p_i(x_0,\bf{s}_i)},
$$
where $m^k(x_0)$ is the workload of local maximum point, $\bf{s}_i$ is the location of agent $i$. It can be concluded that the coverage time of the triangular region where the local maximum point is located is $T_1+T_2$, which is limited. According to the definition of maximum point $x_0$, we have
$$
\bigtriangledown T_i(x_0)=0, \quad H(T_i(x_0))<0.
$$
According to Lemma 4.1, we have $h_i(x_0)>0$. For the triangular region that has not been covered, due to finite triangulation cells of subregion $D_i$, it can be deduced that the coverage time of the region is finite, which completes the proof.
\end{proof}

\begin{theorem}
Algorithm 3 ensures the finite-time completion of cooperative coverage mission.
\end{theorem}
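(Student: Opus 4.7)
The plan is to stitch together the finite-time guarantee of Lemma 4.3 across the two regimes of Algorithm~3. For each iteration index $k < k_{max}$, the agents operate under Algorithm~2 on the fixed Voronoi partition $\{D_i^k\}$ produced by (\ref{1}), (\ref{16}), (\ref{17}), and the iteration terminates as soon as any single agent has cleared its own subregion. By Lemma 4.3 every agent $i$ would individually finish $D_i^k$ in some finite time $T_i^k < \infty$, so the iteration duration satisfies $T^k \le \min_i T_i^k < \infty$. Since the handoff step (\ref{16})--(\ref{17}) between iterations is assumed to be instantaneous (the non-covering motion speed is taken much larger than $V_i$), the cumulative time after the first $k_{max}-1$ iterations is $\sum_{k=1}^{k_{max}-1} T^k$, still a finite sum.

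Next I would handle the terminal regime $k = k_{max}$, in which Algorithm~3 switches to Algorithm~1. Here every agent remains bound to its current Voronoi cell $D_i^{k_{max}}$ and is responsible for driving the residual workload on it to zero. Viewing the temperature-field dynamics of Algorithm~1 as inheriting the Delaunay triangulation / Maximal Update machinery used in the proof of Lemma 4.3, the lemma applies cell by cell, so each agent completes its cell in finite time $T_i^{k_{max}}<\infty$. The terminal phase therefore finishes in $T^{k_{max}} = \max_i T_i^{k_{max}} < \infty$. Summing the two regimes yields $T \le \sum_{k=1}^{k_{max}} T^k < \infty$, and by (\ref{5}) this gives $M(T)=0$, which is precisely finite-time completion of the cooperative coverage mission.

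The step I expect to be the main obstacle is the terminal regime. Algorithm~1 as written does not explicitly invoke Delaunay triangulation or the Maximal Update rule, so legitimately applying Lemma 4.3 requires arguing that the same machinery is in force once $k=k_{max}$ (or, failing that, supplying a separate finite-time argument tailored to the presumably small residual workload remaining after the $k_{max}-1$ rebalancing iterations). A secondary technical concern is verifying that the repartition (\ref{16})--(\ref{17}) always produces a well-defined Voronoi decomposition with no degenerate cells and that $M(t)$ is nonincreasing across handoffs, so the bookkeeping in (\ref{14}) is consistent and the finite iteration times $T^k$ can indeed be added without hidden cancellations.
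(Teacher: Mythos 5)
Your proof follows essentially the same route as the paper: the paper likewise argues that each iteration with $k<k_{max}$ terminates in finite time by Lemma 4.3, and that the final $k_{max}$-th iteration (running Algorithm 1) is finite ``in a similar way to Lemma 4.3,'' matching your $\min$/$\max$ decomposition across the two regimes. The obstacle you flag for the terminal regime --- that Algorithm 1 as written does not invoke the Delaunay triangulation / Maximal Update machinery on which Lemma 4.3 rests --- is left equally unaddressed in the paper's own two-sentence proof, so your version is, if anything, the more careful one.
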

\begin{proof}
When $k<k_{max}$, each iteration takes finte time, which can be proved by Lemma~4.3.. For the $k_{max}$-th itration, the completion time depends on the maximum time for the agent to complete the subregion coverage. It is finite-time and can be proved in a similar way to Lemma~4.3..
\end{proof}
\section{Numerical Simulations}
In this section, we provide simulation results to verify the proposed coverage algorithm and compare it with other existing approaches.
\begin{figure}[t!]
	\centering
	\includegraphics[width=12cm]{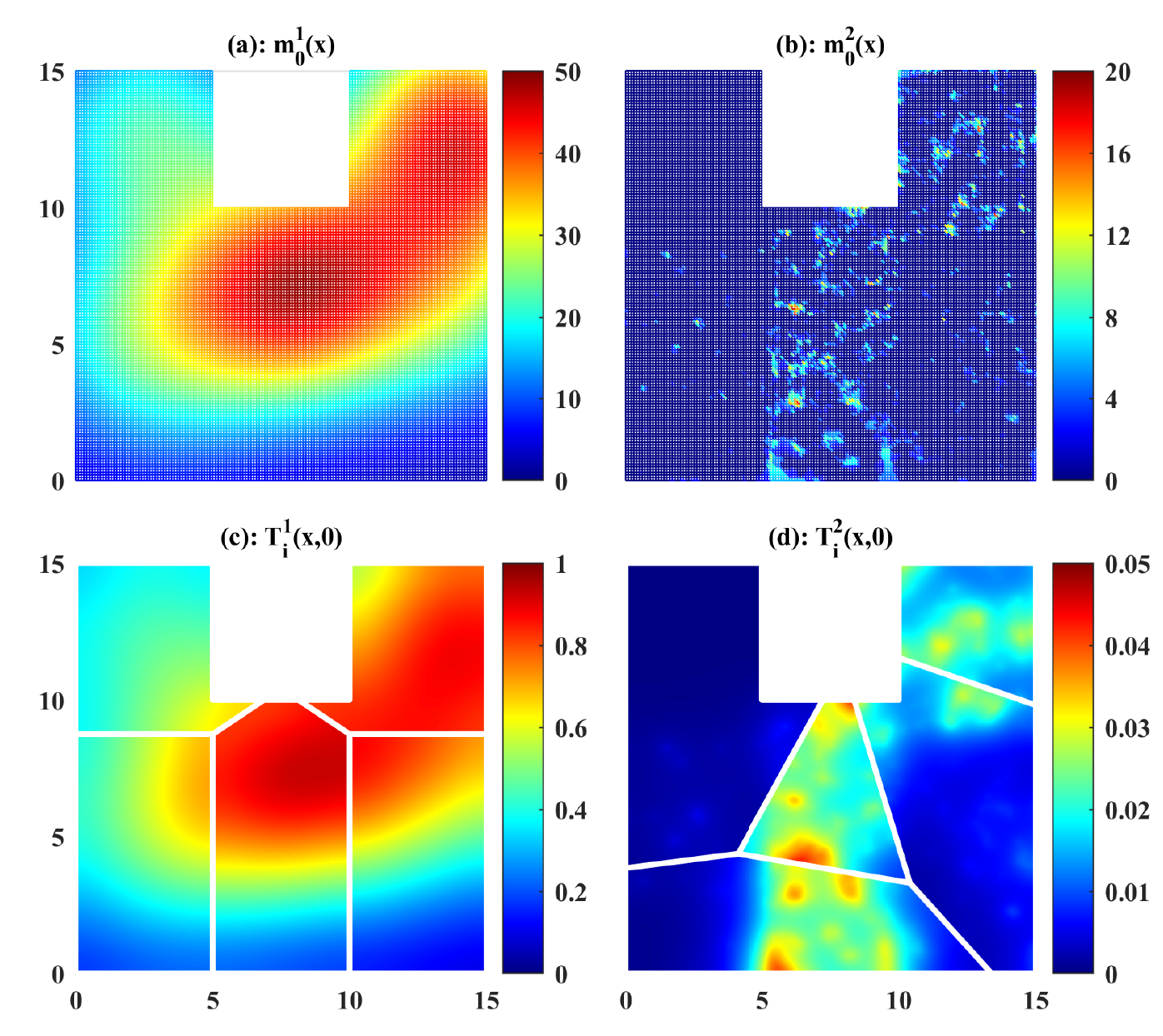}
	\caption{Coverage process of five agents using Algorithm 3 on the region $D$. (a), (b): Initial workload distribution for two iterations. (c), (d): Temprature value distribution at the beginning of first and second iteration.}
\end{figure}
For simplicity, we consider five agents in a region. Region $D$ is a concave area formed by digging $5\times5$ small squares out of $15\times15$ large squares. Parameters are selected as follow: $P=6$, $r=0.5$, $\lambda = 1$, $\alpha = 1$, $\beta=1$, $V_i=0.5$, $T_u=500$, $k_{max}=7$.
Besides, the initial workload on the coverage region is set as follows
\begin{equation}\label{19}
\begin{array}{r}
m_0(\textbf{x})=m_0^1(\textbf{x})=40e^{-\frac{(x-8)^2+(y-7)^2}{20}}+20e^{-\frac{(x-2.5)^2+(y-13)^2}{20}}+40e^{-\frac{(x-14)^2+(y-13)^2}{20}} \\
+20e^{-\frac{(x-2.5)^2+(y-5)^2}{20}}+20e^{-\frac{(x-14)^2+(y-7)^2}{20}}.
\end{array}
\end{equation}
Four snapshots during implementing Algorithm 3 are shown in Figure 3. Through the distribution of temperature value, it is obvious that each region forms its own relatively independent temperature field after Voronoi partition. But at the same time, it also retains the value information of the initial workload distribution. According to the second iteration, although the distribution of workload is discontinuous, it is successfully transformed into a continuous temperature field through the construction of temperature transfer function.
After seven iterations, we finally get the motion trajectories of five agents, as shown in Figure 4 where the square is the starting position and the pentagram is the ending position. Compare Algorithm 3 (ALGO 3) with centralized temperature field control algorithm (\cite{13}) and Algorithm 1 (ALGO 1), and get the curve of workload changing with time in Figure 5.
\begin{figure}[t!]
	\centering
	\begin{minipage}[t]{0.45\linewidth}
		\centering
		\includegraphics[width=7cm]{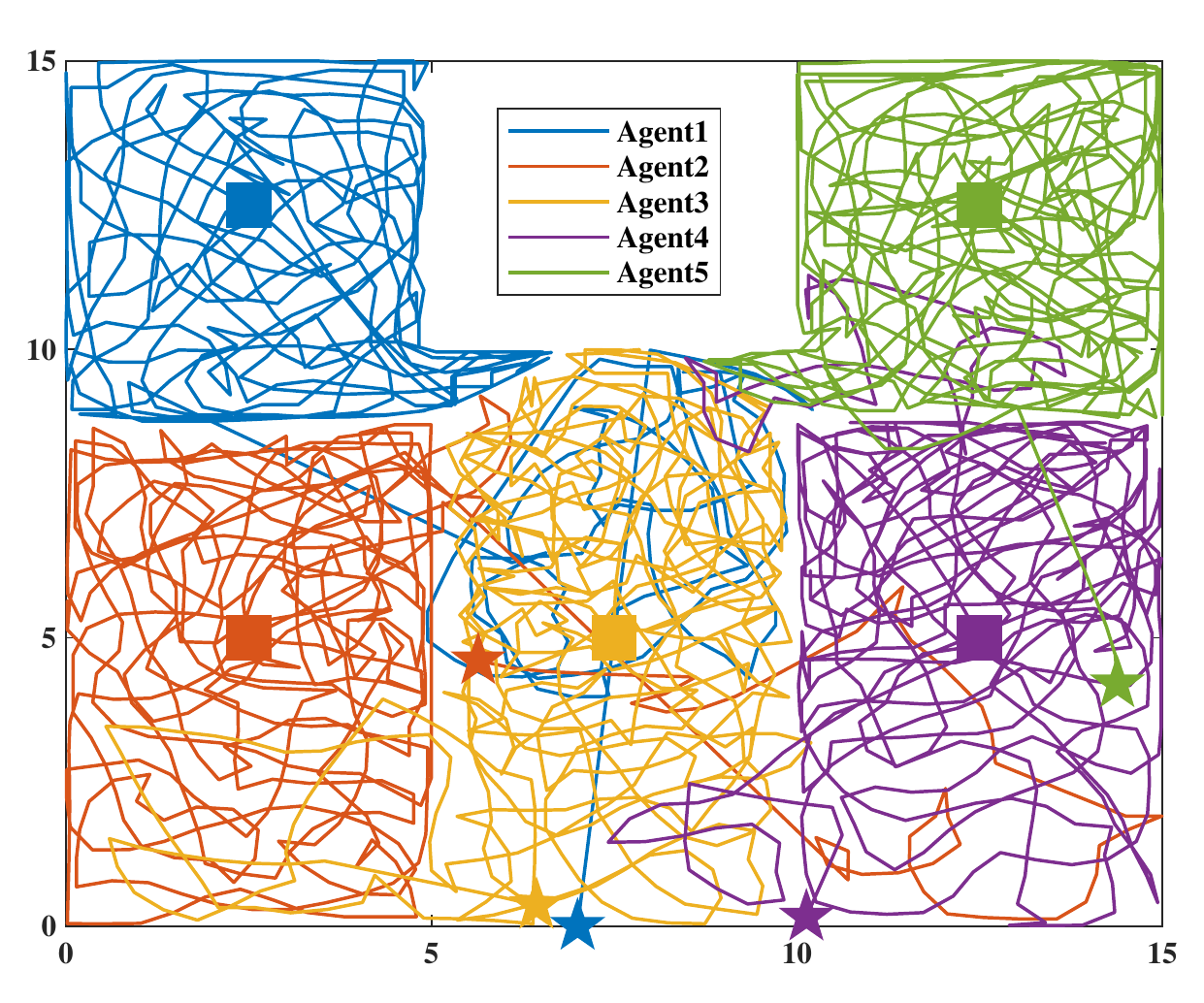}
		\caption{Trajectories of five agents on region $D$.}
	\end{minipage}
	\qquad
	\begin{minipage}[t]{0.45\linewidth}
		\centering
		\includegraphics[width=7.1cm]{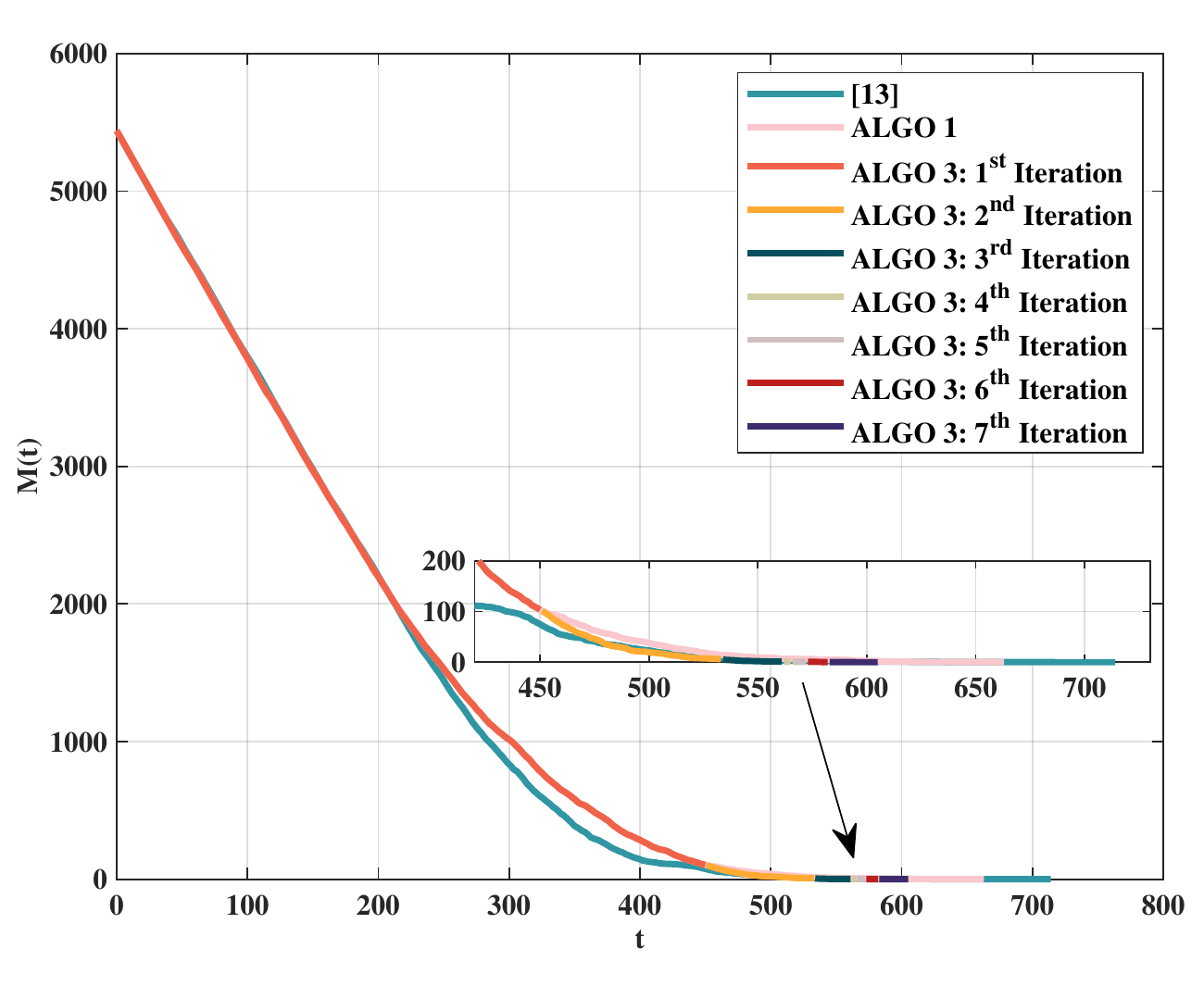}
		\caption{Curve of workload using \cite{13}, ALGO 1 and ALGO 3.}
	\end{minipage}
\end{figure}

During the 1st iteration, the centralized control method achieves higher coverage efficiency. The distributed segmentation method accelerates the reduction of the workload of the sub area, resulting in the phenomenon that the workload of the agent has been emptied through the location, which wastes time. However, it can be seen that the iterative method has brought changes in efficiency in the later stage, and the replacement of work area has alleviated the waste of coverage resources. Based on the optimization objective, we can compare the algorithm effect by observing the difference between the total coverage time and the optimal time in Table 1. It can be seen that Algorithm 3 achieves the best performance.

\begin{table}[t!]
\centering\caption{Comparison of different coverage control algorithms}
\setlength{\tabcolsep}{8mm}{
	\begin{tabular}{cccc}
		\toprule
		Method&T&$\Delta T$& $T^*$\\
		\midrule
		\cite{13}&714&253&\multirow{3}{*}{461}\\
		ALGO 1&663&202&~\\
		ALGO 3&605&144&~\\
		\bottomrule
\end{tabular}}
\end{table}

\section{Conclusions}

This work presents a distributed solution to multi-agent coverage problem in uncertain environment. Voronoi partition is used to divide the whole coverage region into multiple sub-regions, which allows agents to complete the coverage task in a distributed manner. The temperature field induced control strategy enables the agent to move in its own sub-region and complete the coverage task. Besides, the iteration between region partition and coverage control improves the coverage efficiency. Theoretical analysis is conducted to guarantee the finite-time completion of coverage task, numerical simulations are carried out to substantiate the efficiency and robustness of algorithm.

\section*{Acknowledgement}
This work was supported by the Fundamental Research Funds for the Central Universities, China University of Geosciences (Wuhan).

\end{document}